\newtheorem{THM}{Theorem}
\newtheorem{CON}{Conjecture}
\newtheorem*{CLAIM}{Claim}
\newcommand\abs[1]{\lvert #1\rvert}
\newcommand\F{\mathbb F}
\begin{document}
\title{A remark on the paper ``Properties of intersecting families of ordered sets''
by O.~Einstein}
\author{Sang-il Oum\thanks{\texttt{sangil@kaist.edu}}
\thanks{Supported by the National Research Foundation of Korea (NRF) grant funded by the Korea government (MSIP) (No. NRF-2017R1A2B4005020).} }
\author{Sounggun Wee\thanks{\texttt{tjdrns2704@kaist.ac.kr}}}

\affil{Department of Mathematical Sciences, KAIST,  Daejeon, South Korea.}

\date\today
\maketitle

\begin{abstract}
  O.~Einstein (2008) proved Bollob\'as-type
  theorems on intersecting families of ordered sets of finite sets and subspaces.
Unfortunately, we report that the proof of a theorem on
ordered sets of subspaces had a mistake. We prove two weaker variants.
\end{abstract}
\section{Introduction}
The following theorem generalizing the theorem of Bollob\'as~\cite{Bollobas1965}
is well known and proved by using the wedge product method (see~\cite{BF1992}).
\begin{THM}[Lov\'asz~\cite{Lovasz1977}; skew version]\label{thm:lovasz}
  Let $a$, $b$ be positive integers.
  Let $U_1,U_2,\ldots,U_m$, $V_1,V_2,\ldots,V_m$ be subspaces satisfying the following:
  \begin{enumerate}[(i)]
  \item $\dim U_i\le a$ and $\dim V_i\le b$ for all $i=1,2,\ldots,m$.
  \item $U_i\cap V_i=\{0\}$ for all $i=1,2,\ldots,m$.
  \item $U_i\cap V_j\neq\{0\}$ for all $1\le i<j\le m$.
  \end{enumerate}
  Then $m\le \binom{a+b}{a}$.
\end{THM}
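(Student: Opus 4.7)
The plan is to use the wedge product (exterior algebra) method, the standard tool for Bollob\'as-type results. First I would reduce to the case $\dim U_i = a$ and $\dim V_i = b$: any $U_i$ of smaller dimension can be enlarged generically, because $\dim U_i+\dim V_i<a+b$ leaves room to add a vector outside $U_i+V_i$ while preserving $U_i\cap V_i=\{0\}$; enlarging $U_i$ only enlarges the intersections $U_i\cap V_j$, so condition~(iii) survives.

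Next I would reduce the ambient dimension to exactly $a+b$ via a generic linear map $\pi$ onto a space $W$ of dimension $a+b$. For generic $\pi$ (working over an infinite field, or passing to a field extension), the restriction to each $U_i$ and $V_i$ is injective, and $\pi(U_i)+\pi(V_i)=W$, which is equivalent to $\pi(U_i)\cap\pi(V_i)=\{0\}$. For $i<j$ the hypothesis $U_i\cap V_j\neq\{0\}$ gives $\dim(U_i+V_j)<a+b$, hence $\dim(\pi(U_i)+\pi(V_j))<\dim W$, and by a dimension count $\pi(U_i)\cap\pi(V_j)\neq\{0\}$. So one may assume the ambient space is $W$ with $\dim W=a+b$.

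Now choose bases and form $u_i=u_{i,1}\wedge\cdots\wedge u_{i,a}\in\Lambda^a(W)$ and $v_j=v_{j,1}\wedge\cdots\wedge v_{j,b}\in\Lambda^b(W)$. Since $\dim\Lambda^{a+b}(W)=1$, the wedge $u_i\wedge v_j$ vanishes exactly when the $a+b$ constituent vectors are linearly dependent, equivalently when $U_i\cap V_j\neq\{0\}$. Thus $u_i\wedge v_i\neq 0$ while $u_i\wedge v_j=0$ for $i<j$. To finish I would show $v_1,\ldots,v_m$ are linearly independent in $\Lambda^b(W)$: if $\sum_j c_j v_j=0$ and $k$ is the smallest index with $c_k\neq 0$, then wedging with $u_k$ kills every $v_j$ with $j>k$ and leaves $c_k(u_k\wedge v_k)\neq 0$, a contradiction. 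Hence $m\le\dim\Lambda^b(W)=\binom{a+b}{a}$.

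The main obstacle, as I see it, is not the wedge step but the reductions: one must verify that a single linear map $\pi$ can be chosen to satisfy all the genericity requirements simultaneously, and this forces a mild hypothesis on the base field that is easy to overlook.
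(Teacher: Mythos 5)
Your proof is correct and follows exactly the wedge-product method that the paper cites for this theorem (and itself deploys, via Corollary 3.14 of Babai--Frankl for the generic projection, in its proof of Theorem~\ref{thm:second}): reduce to full dimensions, project generically to an $(a+b)$-dimensional space, and use the triangular vanishing pattern $u_i\wedge v_j=0$ for $i<j$, $u_i\wedge v_i\neq0$ to get linear independence in $\Lambda^b(W)$. No issues to report.
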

Ori Einstein~\cite{Einstein2008} published a paper on a generalization of the above theorem and its consequence on finite sets by Frankl~\cite{Frankl1982}.
We will show that his proof of Theorem 2.7 in \cite{Einstein2008} is incorrect and so we state it as a conjecture.
\begin{CON}[Theorem 2.7 of \cite{Einstein2008}]\label{con}
  Let $\ell_1$, $\ell_2$, $\ldots$, $\ell_k$ be positive integers.
  Let $U$ be a linear space over a field $\F$.
  Consider the following matrix of subspaces:
\[
  \begin{matrix}
    U_{11}&U_{12}&\cdots&U_{1k}\\
    U_{21}&U_{22}&\cdots&U_{2k}\\
    \cdots & \cdots & \vdots & \cdots\\
    U_{m1}&U_{m2}&\cdots&U_{mk}
  \end{matrix}
\]
If these subspaces satisfy:
\begin{enumerate}[(i)]
\item for every $1\le j\le k$, $1\le i\le m$, $\dim U_{ij}\le \ell_j$;
\item for every fixed $i$, all subspaces $U_{ij}$ are pairwise disjoint;
\item for each $i<i'$, there exist some $j<j'$ such that $U_{ij}\cap
  U_{i'j'}\neq \{0\}$;
\end{enumerate}
then \[m\le \frac{(\sum_{r=1}^k \ell_r)!}{\prod_{r=1}^k \ell_r!}.\]
\end{CON}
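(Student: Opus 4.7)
I would attempt to generalize the wedge-product proof of Theorem~\ref{thm:lovasz}, which already settles the case $k=2$. The first step is to enlarge $\F$ if necessary and embed $U$ into an ambient space $W$ of dimension $L = \sum_{r=1}^k \ell_r$ in generic position, so that each $U_{ij}$ extends to an $\ell_j$-dimensional subspace by adjoining generic vectors. I would then represent the extension by an $\ell_j$-fold wedge $\omega_{ij}\in\bigwedge^{\ell_j}W$ and assemble, for each row $i$, the tensor
\[
  \Omega_i \;:=\; \omega_{i,1}\otimes\omega_{i,2}\otimes\cdots\otimes\omega_{i,k}
  \;\in\; T\;:=\;\bigotimes_{j=1}^k \bigwedge^{\ell_j}W.
\]

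The next step would be to construct a quotient $\pi\colon T\to V$ with $\dim V = \binom{L}{\ell_1,\ldots,\ell_k}$---a natural candidate is a Schur--Weyl / permutation module of shape $(\ell_1,\ldots,\ell_k)$ for $GL(W)$, whose dimension is exactly this multinomial---together with a bilinear pairing $B$ on $V$ satisfying $B(\pi(\Omega_i),\pi(\Omega_{i'}))=0$ for $i<i'$ (derived from (iii)) and $B(\pi(\Omega_i),\pi(\Omega_i))\ne 0$ (derived from (ii) and genericity). Triangularity with nonzero diagonal would then force linear independence of $\pi(\Omega_1),\ldots,\pi(\Omega_m)$ in $V$, yielding $m\le \dim V$. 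For $k=2$ the map $\pi$ is trivial and the pairing reduces to $B(\alpha\otimes\beta,\alpha'\otimes\beta')=\alpha\wedge\beta'\in \bigwedge^L W\cong \F$, which is precisely the skew Bollob\'as/Lov\'asz argument.

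The main obstacle, in my estimation, is finding the correct $(\pi,B)$ for $k\ge 3$. Condition (iii) provides, for each pair $i<i'$, only \emph{some} witness pair of columns $(j,j')$ with $j<j'$ and $U_{ij}\cap U_{i'j'}\ne \{0\}$; a wedge that mixes $\omega_{i,j}$ with $\omega_{i',j'}$ therefore vanishes, but the witness depends on $(i,i')$, so one has to bundle these position-dependent vanishings into a single pairing uniform in $i<i'$. A secondary difficulty is that condition (ii) supplies only pairwise trivial intersections, which for $k\ge 3$ is strictly weaker than requiring $U_{i,1}\oplus\cdots\oplus U_{i,k}$ to be a direct sum---and the latter is what is needed to guarantee that $\omega_{i,1}\wedge\cdots\wedge\omega_{i,k}$ survives generic extension. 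These two points together likely explain why Einstein's argument broke down, and suggest that either a substantially new idea or a strengthening of hypothesis (ii) to a direct-sum condition will be required.
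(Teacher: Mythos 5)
This statement is left as a conjecture in the paper precisely because no correct proof is known: the authors show that Einstein's original wedge-product argument is broken and only establish weaker variants (Theorem~\ref{thm:variant1}, which strengthens hypothesis (ii) to a direct-sum condition, and Theorem~\ref{thm:second}, which weakens the bound to $\prod_{1\le a<b\le k}(\ell_a+\ell_b)!/(\prod_r \ell_r!)^{k-1}$). Your proposal does not close this gap. It is an outline whose central objects --- the quotient $\pi\colon T\to V$ onto a space of dimension $\binom{L}{\ell_1,\ldots,\ell_k}$ and the bilinear pairing $B$ with the required triangularity --- are never constructed, and constructing them is the entire difficulty. In particular, the two obstacles you name at the end are exactly the points where Einstein's proof fails: (a) the witness pair $(j,j')$ in condition (iii) depends on $(i,i')$, so the vanishing $\omega_{ij}\wedge\omega_{i'j'}=0$ occurs in a position that varies with the pair of rows and cannot obviously be packaged into a single pairing that is uniformly zero for all $i<i'$; and (b) pairwise disjointness in condition (ii) does not imply $\dim(\sum_j U_{ij})=\sum_j\dim U_{ij}$ (the paper's example: the spans of $(1,0)^{\mathsf T}$, $(0,1)^{\mathsf T}$, $(1,1)^{\mathsf T}$ are pairwise disjoint but span only a $2$-dimensional space), so the diagonal term $B(\pi(\Omega_i),\pi(\Omega_i))\ne 0$ cannot be derived from (ii) alone, with or without generic extension.

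To your credit, your diagnosis of where the argument breaks coincides with the paper's own analysis of the mistake, and your suggested fallback --- strengthening (ii) to a direct-sum condition --- is exactly Theorem~\ref{thm:variant1}. But as a proof of the stated conjecture, the proposal has a genuine and unresolved gap: it reduces the problem to finding $(\pi,B)$ without exhibiting them, and the paper's position is that no such construction is currently known.
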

Here is the overview of this note.
In the next section, we will sketch the reason why the proof of Theorem 2.7 in \cite{Einstein2008} is
incorrect and present a weaker theorem (Theorem~\ref{thm:first}) obtained by tightening condition (ii).
In Section~\ref{sec:variant2}, 
we prove another weaker theorem (Theorem~\ref{thm:second}), 
by providing a weaker upper bound for $m$ instead of modifying any assumptions.
Section~\ref{sec:threshold} will discuss the threshold versions.

\section{The mistake and its first remedy}
Let us first point out the mistake in the proof of Conjecture~\ref{con} in \cite{Einstein2008}. As it is typical in the wedge product method, 
we take $v_i=\bigwedge_{j=1}^{k-1} \wedge T_j(U_{ij})$ 
and $w_i=\bigwedge_{j=1}^{k-1} \bigwedge_{r=j+1}^k \wedge T_j(U_{ir})$
for some linear transformations $T_1$, $T_2$, $\ldots$, $T_{k-1}$.
Then the following claim is made:

\begin{CLAIM}[Page 41 in \cite{Einstein2008}]
  For every $i\le i'$, $v_i\wedge w_{i'}\neq0$ if and only if $i=i'$.
\end{CLAIM}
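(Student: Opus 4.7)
The plan is to verify both directions of the biconditional via the standard wedge-product bookkeeping. For the forward direction (showing $v_i\wedge w_{i'}=0$ whenever $i<i'$), I would invoke hypothesis~(iii) to produce indices $j<j'$ and a nonzero $u\in U_{ij}\cap U_{i'j'}$. With a sufficiently generic choice of $T_j$, the image $T_j(u)$ is a nonzero vector lying in both $T_j(U_{ij})$ and $T_j(U_{i'j'})$. The former is a factor of $v_i$ and the latter is a factor of $w_{i'}$ (since $j'>j$ means $U_{i'j'}$ appears in the inner wedge for $j$ in the definition of $w_{i'}$), so the combined wedge contains two multivectors sharing a common direction and therefore vanishes.

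For the harder direction $i=i'$, a direct rearrangement gives
\[
v_i\wedge w_i=\pm\bigwedge_{j=1}^{k-1}\Bigl[\wedge T_j(U_{ij})\wedge\wedge T_j(U_{i,j+1})\wedge\cdots\wedge\wedge T_j(U_{ik})\Bigr].
\]
If one arranges the maps $T_j$ so that the outer wedge factorises across~$j$ (e.g.\ by sending each into an independent target space), then nonvanishing of $v_i\wedge w_i$ reduces to verifying, for each fixed $j$, that $T_j(U_{ij}),T_j(U_{i,j+1}),\ldots,T_j(U_{ik})$ are independent subspaces---equivalently, that their sum is a direct sum. For a sufficiently generic $T_j$ this is equivalent to the analogous statement about the original subspaces $U_{ij},U_{i,j+1},\ldots,U_{ik}$.

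The crux of the argument, and what I expect to be the genuine obstruction, is exactly this last step. Hypothesis~(ii) only guarantees that $U_{i1},\ldots,U_{ik}$ are \emph{pairwise} disjoint, whereas the wedge computation above needs the strictly stronger property that these subspaces form a direct sum. Three or more subspaces can be pairwise disjoint without being independent---the classical witness is three distinct lines through the origin of a plane---and no linear transformation can convert a non-independent family into an independent one. Hence the claim as stated does not follow from hypotheses~(i)--(iii), and any honest proof is forced either to strengthen~(ii) into a direct-sum condition (the route of Theorem~\ref{thm:first}) or to weaken the numerical conclusion (as in Theorem~\ref{thm:second}).
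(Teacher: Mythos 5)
Your analysis reaches exactly the paper's conclusion by exactly the paper's route: the ``only if'' direction of the Claim requires $U_{i1},\ldots,U_{ik}$ to form a direct sum, condition~(ii) only guarantees pairwise disjointness, and the three-lines-in-a-plane example (the paper uses the spans of $(1,0)$, $(0,1)$, $(1,1)$) shows the gap is real, so the Claim is false as stated. Nothing to add.
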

This claim is false in general.
For instance, if $U_{11}\cap (U_{12}+U_{13}+\cdots+U_{1,k-1})\neq \{0\}$, then 
$\wedge T_1(U_{11}) \wedge \bigwedge_{r=2}^k \wedge T_1(U_{1r})=0$
and therefore $v_1\wedge w_1'=0$.
The crucial mistake is that condition (ii) in Conjecture~\ref{con} does not imply
that $\dim (U_{i1}+U_{i2}+\cdots+U_{ik})=\sum_{j=1}^k \dim U_{ij}$.
(For instance the spans of 
$\left(\begin{smallmatrix}
  1\\
  0
\end{smallmatrix}\right)$, 
$\left(\begin{smallmatrix}
  0\\
  1
\end{smallmatrix}\right)$, and
$\left(\begin{smallmatrix}
    1\\
  1
\end{smallmatrix}\right)$ are pairwise disjoint and yet their sum has
dimension $2$ only.)

If $\dim (U_{i1}+U_{i2}+\cdots+U_{ik})=\sum_{j=1}^k \dim U_{ij}$,
then
the claim is true and so we can recover the following weaker theorem
by the proof in \cite{Einstein2008}.
\begin{THM}\label{thm:variant1}
  Let $\ell_1$, $\ell_2$, $\ldots$, $\ell_k$ be positive integers.
  Let $U$ be a linear space over a field $\F$.
  Consider the following matrix of subspaces:
\[
  \begin{matrix}
    U_{11}&U_{12}&\cdots&U_{1k}\\
    U_{21}&U_{22}&\cdots&U_{2k}\\
    \cdots & \cdots & \vdots & \cdots\\
    U_{m1}&U_{m2}&\cdots&U_{mk}
  \end{matrix}
\]
If these subspaces satisfy:
\begin{enumerate}[(i)]
\item for every $1\le j\le k$, $1\le i\le m$, $\dim U_{ij}\le \ell_j$;
\item for every fixed $i$,  $\dim (\sum_{j=1}^k
  U_{ij})=\sum_{j=1}^k \dim U_{ij}$;
\item for each $i<i'$, there exist some $j<j'$ such that $U_{ij}\cap
  U_{i'j'}\neq \{0\}$; 
\end{enumerate}
then \[m\le \frac{(\sum_{r=1}^k \ell_r)!}{\prod_{r=1}^k \ell_r!}.\]
 \end{THM}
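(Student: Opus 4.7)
The plan is to run the wedge-product argument of Einstein~\cite{Einstein2008} verbatim; the strengthened hypothesis~(ii) is precisely what is needed to justify the claim on page~41 of that paper. After harmlessly enlarging each $U_{ij}$ so that $\dim U_{ij}=\ell_j$ while preserving (ii) and (iii), set $N=\sum_{r=1}^k\ell_r$ and choose generic linear transformations $T_j\colon U\to \F^{\ell_j+\ell_{j+1}+\cdots+\ell_k}$ for $j=1,\ldots,k-1$. Following Einstein, define
\[
v_i=\bigwedge_{j=1}^{k-1}\wedge T_j(U_{ij}),\qquad
w_i=\bigwedge_{j=1}^{k-1}\bigwedge_{r=j+1}^{k}\wedge T_j(U_{ir}),
\]
each viewed in a fixed multigraded component of $\bigotimes_{j=1}^{k-1}\bigwedge\F^{\ell_j+\cdots+\ell_k}$.

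The key claim to reinstate is that, for $i\le i'$, $v_i\wedge w_{i'}\neq 0$ if and only if $i=i'$. For $i<i'$, hypothesis~(iii) supplies $j<j'$ and $0\neq u\in U_{ij}\cap U_{i'j'}$; for a generic $T_j$ the image $T_j(u)$ is non-zero and occurs as a wedge factor both in the $j$-block of $v_i$ (via $U_{ij}$) and in the $j$-block of $w_{i'}$ (via $U_{i'j'}$, which is eligible since $j<j'\le k$ and $j\le k-1$), so $v_i\wedge w_{i'}=0$ by antisymmetry.

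The non-vanishing $v_i\wedge w_i\neq 0$ is exactly the step whose justification failed under the original hypothesis, and it is what the strengthened~(ii) rescues. That hypothesis places the subspaces $U_{i1},\ldots,U_{ik}$ in internal direct sum inside $U$, so every generic $T_j$ is injective on their sum; hence the images $T_j(U_{ij}),T_j(U_{i,j+1}),\ldots,T_j(U_{ik})$ remain in direct sum, with total dimension exactly $\ell_j+\cdots+\ell_k$, matching the dimension of the target of $T_j$. The $j$-block of $v_i\wedge w_i$ is therefore a non-zero element of a top exterior power, so the whole product $v_i\wedge w_i$ is non-zero.

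Granting the claim, a standard triangularity argument closes the proof: from $\sum_ic_iv_i=0$, wedging with $w_j$ for the largest $j$ with $c_j\neq 0$ kills all terms with $i<j$ by the claim and all terms with $i>j$ by the choice of $j$, leaving $c_j(v_j\wedge w_j)=0$, a contradiction. Thus $v_1,\ldots,v_m$ are linearly independent in their ambient space, whose dimension is $\prod_{j=1}^{k-1}\binom{\ell_j+\cdots+\ell_k}{\ell_j}$; this product telescopes to $\frac{N!}{\prod_r\ell_r!}$, giving the stated bound. The main technical point, beyond the claim itself, is arranging a single choice of $T_1,\ldots,T_{k-1}$ that simultaneously realises all the required injectivity and general-position conditions; this is a Zariski-open requirement, so it is met by a generic choice when $\F$ is sufficiently large, with the small-field case handled by a routine extension of scalars.
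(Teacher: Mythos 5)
Your proposal is correct and takes essentially the same route as the paper, which establishes Theorem~\ref{thm:variant1} precisely by observing that the strengthened hypothesis (ii) forces $\dim(\sum_j U_{ij})=\sum_j\dim U_{ij}$ and hence validates the claim $v_i\wedge w_i\neq 0$ in Einstein's original wedge-product argument; you have simply written out the details of that argument (generic $T_j$, triangularity, and the telescoping product of binomial coefficients). No gaps.
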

Though Theorem~\ref{thm:variant1} is weaker than
Conjecture~\ref{con}, it allows us to recover Theorem 2.8 of
\cite{Einstein2008}.
\begin{THM}[Theorem 2.8 of \cite{Einstein2008}]\label{thm:first}
  Let $\ell_1$, $\ell_2$, $\ldots$, $\ell_k$ be positive integers.
  Consider the following matrix of sets:
\[
  \begin{matrix}
    A_{11}&A_{12}&\cdots&A_{1k}\\
    A_{21}&A_{22}&\cdots&A_{2k}\\
    \cdots & \cdots & \vdots & \cdots\\
    A_{m1}&A_{m2}&\cdots&A_{mk}
  \end{matrix}
\]
If these sets satisfy:
\begin{enumerate}[(i)]
\item for every $1\le j\le k$, $1\le i\le m$, $\abs{A_{ij}}\le \ell_j$;
\item for every fixed $i$, all sets $A_{ij}$ are pairwise disjoint;
\item for each $i<i'$, there exist some $j<j'$ such that $A_{ij}\cap
  A_{i'j'}\neq \emptyset$;
\end{enumerate}
then \[m\le \frac{(\sum_{r=1}^k \ell_r)!}{\prod_{r=1}^k \ell_r!}.\]
\end{THM}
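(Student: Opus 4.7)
The plan is to reduce Theorem~\ref{thm:first} to Theorem~\ref{thm:variant1} by the standard linearization trick: replace each set $A_{ij}$ by the coordinate subspace it spans in a vector space whose basis is indexed by the ground set.

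More precisely, let $X=\bigcup_{i,j} A_{ij}$, let $\F$ be any field, and let $V=\F^X$ with standard basis $\{e_x : x\in X\}$. For each pair $(i,j)$, set
\[ U_{ij} = \operatorname{span}\{e_x : x\in A_{ij}\}\subseteq V. \]
I would then verify the three hypotheses of Theorem~\ref{thm:variant1} one by one. Condition (i) is immediate since $\dim U_{ij}=\abs{A_{ij}}\le \ell_j$. For condition (ii), observe that because the sets $A_{i1},\ldots,A_{ik}$ are pairwise disjoint for each fixed $i$, the corresponding basis vectors $\{e_x : x\in A_{i1}\cup\cdots\cup A_{ik}\}$ are distinct, so
\[ \dim\Bigl(\sum_{j=1}^k U_{ij}\Bigr)=\sum_{j=1}^k \abs{A_{ij}}=\sum_{j=1}^k \dim U_{ij}. \]
For condition (iii), if $A_{ij}\cap A_{i'j'}\neq\emptyset$, pick $x$ in the intersection; then $e_x$ is a nonzero vector in $U_{ij}\cap U_{i'j'}$.

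With the three hypotheses verified, Theorem~\ref{thm:variant1} immediately yields the desired bound $m\le \tfrac{(\sum_{r=1}^k \ell_r)!}{\prod_{r=1}^k \ell_r!}$. There is no real obstacle: the crucial point is precisely that the pairwise disjointness hypothesis (ii) for \emph{sets} automatically upgrades to the stronger direct-sum condition (ii) required in Theorem~\ref{thm:variant1}, which is exactly the condition that fails for general subspaces (as illustrated by the three lines in $\F^2$ in the preceding discussion). This is why the set version survives even though Conjecture~\ref{con} remains open.
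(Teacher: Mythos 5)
Your reduction is correct and is exactly what the paper intends: it states that Theorem~\ref{thm:variant1} ``allows us to recover'' Theorem~\ref{thm:first}, and the implicit argument is precisely the linearization of sets to coordinate subspaces, where pairwise disjointness of the $A_{ij}$ upgrades automatically to the direct-sum hypothesis (ii) of Theorem~\ref{thm:variant1}. Your verification of the three hypotheses is complete and matches the paper's approach.
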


Note that Theorem~\ref{thm:first} implies that Conjecture~\ref{con} is
true when $\ell_1=\ell_2=\cdots=\ell_k=1$.

\section{Second remedy}\label{sec:variant2}
Naturally we ask whether Conjecture~\ref{con} can be proven with some
upper bound on $m$. Here we show that this is possible, while
generalizing Theorem~\ref{thm:lovasz}.
\begin{THM}\label{thm:second}
  Under the same assumptions of Conjecture~\ref{con},  we have
  \[
    m\le \frac{
      \prod_{1\le a<b\le k} (\ell_a+\ell_b)!
    }{ (\prod_{r=1}^k \ell_r!)^{k-1}}.
  \]
\end{THM}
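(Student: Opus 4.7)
The plan is to apply the wedge product method simultaneously to every pair of columns. Observe that the target bound equals
\[
\prod_{1 \le a < b \le k} \binom{\ell_a+\ell_b}{\ell_a}
=
\dim \Biggl( \bigotimes_{1 \le a < b \le k} \bigwedge^{\ell_a} \F^{\ell_a+\ell_b} \Biggr),
\]
since each $\ell_r!$ appears in exactly $k-1$ of the denominators. So it suffices to produce, for each row $i$, a vector $X_i$ in this tensor product together with a dual-style partner $Y_i \in \bigotimes_{a<b} \bigwedge^{\ell_b}\F^{\ell_a+\ell_b}$, such that the $m\times m$ matrix of scalars $X_i \wedge Y_{i'}$ (obtained by wedging in each tensor factor and identifying $\bigwedge^{\ell_a+\ell_b}\F^{\ell_a+\ell_b}\cong\F$) is lower triangular with nonzero diagonal. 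Each tensor factor will reproduce, for the two columns $a$ and $b$, the same wedge-product construction used to prove Theorem~\ref{thm:lovasz}, and the single witnessing pair supplied by condition (iii) will suffice to kill one factor and hence the whole tensor.

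For the setup, first extend $\F$ to an infinite field (conditions (i)--(iii) pass to any extension), and then enlarge each $U_{ij}$ by adjoining fresh, independent new dimensions, chosen different for every $(i,j)$, so that $\dim U_{ij} = \ell_j$ exactly. Because the adjoined vectors are new for each pair $(i,j)$, no new intersection is created anywhere, so (ii) and (iii) survive the padding. For every pair $a<b$ in $\{1,\ldots,k\}$ choose a generic linear map $T_{a,b}: U \to \F^{\ell_a+\ell_b}$ and set
\[
x_{i,a,b} = \bigwedge T_{a,b}(U_{ia}) \in \bigwedge^{\ell_a}\F^{\ell_a+\ell_b}, \qquad y_{i,a,b} = \bigwedge T_{a,b}(U_{ib}) \in \bigwedge^{\ell_b}\F^{\ell_a+\ell_b},
\]
and define $X_i = \bigotimes_{a<b} x_{i,a,b}$ and $Y_i = \bigotimes_{a<b} y_{i,a,b}$.

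The verification splits in two. For $i=i'$, condition (ii) gives $U_{ia}\cap U_{ib}=\{0\}$, so $\dim(U_{ia}+U_{ib})=\ell_a+\ell_b$, matching the codomain dimension of $T_{a,b}$; by genericity $T_{a,b}$ is an isomorphism on this sum, so $x_{i,a,b}\wedge y_{i,a,b}\neq 0$ for every pair $a<b$, and hence $X_i\wedge Y_i\neq 0$. For $i<i'$, condition (iii) supplies some $a<b$ with $U_{ia}\cap U_{i'b}\neq\{0\}$; generically $T_{a,b}$ is nonzero on any prescribed nonzero vector, so $T_{a,b}(U_{ia})\cap T_{a,b}(U_{i'b})\neq\{0\}$ and $x_{i,a,b}\wedge y_{i',a,b}=0$, which forces $X_i\wedge Y_{i'}=0$ regardless of the other factors. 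The standard triangular argument then gives linear independence of $X_1,\ldots,X_m$ and the bound $m\le \prod_{a<b}\binom{\ell_a+\ell_b}{\ell_a}$.

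The step most in need of careful writing is the simultaneous genericity: a single $T_{a,b}$ must be full rank on $U_{ia}+U_{ib}$ for every $i$ (for the nonzero diagonal) and nonzero on a chosen nonzero vector of each intersection $U_{ia}\cap U_{i'b}$ with $i<i'$ selected by (iii) (for the vanishing above the diagonal), and this must hold simultaneously across the $\binom{k}{2}$ different maps. Each of these requirements cuts out a nonempty Zariski-open subset in the space of linear maps, so their finite intersection is nonempty over an infinite field; this is routine but should be spelled out explicitly in the final writeup, along with the check that the padding to exact dimensions really does not produce unwanted intersections between distinct rows.
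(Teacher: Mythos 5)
Your proof is correct and follows essentially the same route as the paper: one linear map $T_{a,b}$ per pair of columns into an $(\ell_a+\ell_b)$-dimensional space, wedge/tensor the images over all pairs, and use the single witnessing pair from condition (iii) to kill one factor above the diagonal. The only cosmetic differences are that you phrase the ambient space as a tensor product of exterior powers rather than a wedge over a direct sum $\bigoplus_{a<b}V_{a,b}$, and you argue genericity directly instead of citing Corollary 3.14 of Babai--Frankl.
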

\begin{proof}
  We may assume that $\dim U_{ij}=\ell_j$ for all $i$, $j$ and $\F$ is infinite.
  Let $V=V_{1,2}\oplus V_{1,3}\oplus \cdots \oplus V_{1,k}\oplus\cdots\oplus
  V_{2,3}\oplus \cdots\oplus V_{k-1,k}= \bigoplus_{a=1}^{k-1}\bigoplus_{b=a+1}^k V_{a,b}$
  be a $\sum_{a=1}^{k-1}\sum_{b=a+1}^k (\ell_a+\ell_b)$-dimensional vector space over $\F$, decomposed into the direct sum of subspaces $V_{a,b}$, each of dimension
  $\ell_a+\ell_b$.
  By Corollary 3.14 of \cite{BF1992}, for all $i<j$, there exists a linear transformation $T_{ab}:U\to V_{a,b}$ such that for all $1\le i\le m$,
  $\dim T_{ab}(U_{ia}) = \ell_a$,
  $\dim T_{ab}(U_{ib}) = \ell_b$,
  and $\dim T_{ab}(U_{ia})\cap T_{ab}(U_{jb})=\dim U_{ia}\cap U_{jb}$ for all $1\le i,j\le m$.
  Finally, for each $1\le i\le m$,
  let $v_i= \bigwedge_{a=1}^{k-1}\bigwedge_{b=a+1}^k \wedge T_{ab}(U_{ia})$
  and $w_i= \bigwedge_{a=1}^{k-1}\bigwedge_{b=a+1}^k \wedge T_{ab}(U_{ib})$.

  We claim that for $i\le i'$, $v_i\wedge w_{i'}\neq0$ if and only if $i=i'$.
  If $i<i'$, then there exist $1\le j<j'\le k$ such that $U_{ij}\cap U_{i'j'}\neq\{0\}$. By the choice of $T_{jj'}$, $T_{jj'}(U_{ij})\cap T_{jj'}(U_{i'j'})\neq\{0\}$ and so $(\wedge T_{jj'}(U_{ij}))\wedge
  (\wedge T_{jj'}(U_{i'j'}))=0$, which implies that $v_i\wedge w_{i'}=0$.
  If $i=i'$, then $v_i\wedge w_{i'}$ is the wedge product of disjoint subspaces
  and so $v_i\wedge w_{i'}\neq 0$.

  Therefore $v_1,v_2,\ldots,v_m$ are linearly independent in the space $\bigwedge_{a=1}^{k-1}\bigwedge_{b=a+1}^k \bigwedge^{\ell_a}V_{a,b}$, whose dimension is
  $\prod_{a=1}^{k-1}\prod_{b=a+1}^k \binom{\ell_a+\ell_b}{\ell_a}
  = \frac{\prod_{1\le a<b\le k}(\ell_a+\ell_b)!}
  {(\prod_{i=1}^k \ell_i!)^{k-1}}
  $. This proves that $m\le 
  \frac{\prod_{1\le a<b\le k}(\ell_a+\ell_b)!}
  {(\prod_{i=1}^k \ell_i!)^{k-1}}$.
\end{proof}

\section{Threshold versions}\label{sec:threshold}
The paper \cite{Einstein2008} uses Conjecture~\ref{con} to deduce the
threshold versions (Lemma 2.9 and Theorem 2.10) to generalize a result of
F\"uredi~\cite{Furedi1984}.
We do not know how to prove Lemma 2.9 and Theorem 2.10 of
\cite{Einstein2008} and so we leave them as conjectures. 
It is not clear how one can relax conditions in Lemma 2.9 and Theorem
2.10 of \cite{Einstein2008}, while avoiding ugly conditions from (ii)
of Theorem~\ref{thm:first}. 
(A necessary condition $\ell_i\ge t$ was missing in \cite{Einstein2008}.)
\begin{CON}[Lemma 2.9 of \cite{Einstein2008}]\label{con2}
  Let $\ell_1$, $\ell_2$, $\ldots$, $\ell_k$ be positive integers such
  that $\ell_i\ge t$ for all $i$.
  Let $U$ be a linear space over a field $\F$.
  Consider the following matrix of subspaces:
\[
  \begin{matrix}
    U_{11}&U_{12}&\cdots&U_{1k}\\
    U_{21}&U_{22}&\cdots&U_{2k}\\
    \cdots & \cdots & \vdots & \cdots\\
    U_{m1}&U_{m2}&\cdots&U_{mk}
  \end{matrix}
\]
If these subspaces satisfy:
\begin{enumerate}[(i)]
\item for every $1\le j\le k$, $1\le i\le m$, $\dim U_{ij}\le \ell_j$;
\item for every fixed $i$,  $\dim(U_{ij}\cap U_{ij'})\le t$;
\item for each $i<i'$, there exists some $j<j'$ such that $\dim (U_{ij}\cap U_{i'j'})>t$;
\end{enumerate}
then \[m\le \frac{[(\sum_{r=1}^k \ell_r)-kt]!}{\prod_{r=1}^k (\ell_r-t)!}.\]
\end{CON}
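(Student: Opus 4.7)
The natural strategy is to adapt the wedge product method by shifting parameters: replace each $\ell_r$ with $\ell_r - t$ and produce $m$ linearly independent vectors in a target space of dimension $\frac{(\sum_r(\ell_r-t))!}{\prod_r (\ell_r-t)!}$. Assuming (WLOG) $\dim U_{ij}=\ell_j$ and $\F$ infinite, I would first attempt, for each $i$, to peel off a $t$-dimensional subspace $W_{ij}\subseteq U_{ij}$ so that the quotients $U_{ij}/W_{ij}$ inherit a setup resembling Conjecture~\ref{con} with parameters $\ell_j - t$. The idea is to choose the $W_{ij}$'s to absorb all the $t$-dimensional intersection data arising from condition~(ii) and the threshold in condition~(iii), leaving ``residual'' quotients of dimension $\ell_j-t$ that satisfy the hypotheses of Theorem~\ref{thm:variant1} (or Conjecture~\ref{con}), which would already deliver the target bound.

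Concretely, after setting up the $W_{ij}$'s, I would invoke Corollary~3.14 of~\cite{BF1992} to find generic linear maps $T_j : U \to V_j$ with $\dim V_j = \ell_j-t$ such that $T_j$ preserves the dimensions of the residual quotients $U_{ij}/W_{ij}$ and of their relevant pairwise intersections; then form
$v_i = \bigwedge_{j=1}^{k-1} \wedge T_j(U_{ij}/W_{ij})$
and
$w_i = \bigwedge_{j=1}^{k-1} \bigwedge_{r=j+1}^k \wedge T_j(U_{ir}/W_{ir})$
as in the argument behind Theorem~\ref{thm:variant1}. Verifying that $v_i \wedge w_{i'} \neq 0$ if and only if $i=i'$ (for $i\le i'$) would then produce $m$ linearly independent vectors in a space of the conjectured multinomial dimension.

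The hard part, and the reason we cannot push this through, lies in the choice of the subspaces $W_{ij}$. Condition~(ii) only bounds the pairwise intersections $\dim(U_{ij}\cap U_{ij'})\le t$ and does not single out a $t$-dimensional subspace common to $U_{i1},\ldots,U_{ik}$, so there is no canonical ``core'' to quotient out. Even granting such a choice, the resulting spaces $U_{ij}/W_{ij}$ need not satisfy a direct-sum condition analogous to condition~(ii) of Theorem~\ref{thm:variant1}, which is precisely the obstruction that invalidated the proof in~\cite{Einstein2008}. The presence of $t$ only amplifies this difficulty, since the strict inequality $\dim(U_{ij}\cap U_{i'j'})>t$ in condition~(iii) must survive the quotient.

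A more modest target, paralleling Theorem~\ref{thm:second}, would be the pairwise bound
\[
  m \le \frac{\prod_{1\le a<b\le k}(\ell_a+\ell_b-2t)!}{\bigl(\prod_{r=1}^k(\ell_r-t)!\bigr)^{k-1}},
\]
obtained by running the argument of Theorem~\ref{thm:second} pair by pair and invoking a threshold variant of Theorem~\ref{thm:lovasz} (in the spirit of Frankl~\cite{Frankl1982}) for each pair $(\ell_a,\ell_b)$. I would expect this weaker statement to go through routinely, although it does not match the bound asserted in Conjecture~\ref{con2}.
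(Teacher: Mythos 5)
The statement you were asked to prove appears in the paper as Conjecture~\ref{con2}, not as a theorem: the authors explicitly state that they do not know how to prove Lemma 2.9 of \cite{Einstein2008} and leave it open. So there is no proof in the paper to compare against, and your attempt, as you yourself concede, does not close the gap either. Your diagnosis of where the difficulty lies is accurate and consistent with the paper's discussion: condition (ii) only bounds pairwise intersections, so there is no canonical $t$-dimensional core $W_{ij}$ to quotient out, and even after any such choice the residual subspaces need not be in direct sum --- which is exactly the flaw that invalidated Einstein's original wedge-product argument for Conjecture~\ref{con} and which Theorem~\ref{thm:variant1} repairs only by strengthening hypothesis (ii).

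Your fallback is the right one and is precisely what the paper does prove: under the hypotheses of Conjecture~\ref{con2} one gets $m\le \prod_{1\le a<b\le k}(\ell_a+\ell_b-2t)!/(\prod_{r=1}^k(\ell_r-t)!)^{k-1}$. The paper obtains this by applying the reduction from \cite{Einstein2008} (which converts the threshold hypotheses into the hypotheses of Conjecture~\ref{con} with $\ell_r$ replaced by $\ell_r-t$) and then invoking Theorem~\ref{thm:second}; your plan of rerunning the pairwise wedge-product argument with a threshold variant of Theorem~\ref{thm:lovasz} for each pair $(\ell_a,\ell_b)$ amounts to essentially the same computation and yields the same product-of-binomials bound. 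But to be clear: neither route produces the multinomial bound $[(\sum_{r=1}^k\ell_r)-kt]!/\prod_{r=1}^k(\ell_r-t)!$ asserted in the statement, which remains open, so what you have is a correct proof of the paper's weaker theorem rather than of Conjecture~\ref{con2} itself.
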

\begin{CON}[Theorem 2.10 of \cite{Einstein2008}]\label{con3}
  Let $\ell_1$, $\ell_2$, $\ldots$, $\ell_k$ be positive integers such
  that $\ell_i\ge t$ for all $i$.
  Consider the following matrix of sets:
\[
  \begin{matrix}
    A_{11}&A_{12}&\cdots&A_{1k}\\
    A_{21}&A_{22}&\cdots&A_{2k}\\
    \cdots & \cdots & \vdots & \cdots\\
    A_{m1}&A_{m2}&\cdots&A_{mk}
  \end{matrix}
\]
If these sets satisfy:
\begin{enumerate}[(i)]
\item for every $1\le j\le k$, $1\le i\le m$, $\abs{A_{ij}}\le \ell_j$;
\item for every $i$, $j$ and $j'$, $\abs{A_{ij}\cap A_{ij'}}\le t$;
\item for each $i<i'$, there exists some $j<j'$ such that 
$\abs{A_{ij}\cap  A_{i'j'}}>t$;
\end{enumerate}
then \[m\le \frac{[(\sum_{r=1}^k \ell_r)-kt]!}{\prod_{r=1}^k (\ell_r-t)!}.\]
\end{CON}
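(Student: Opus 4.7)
The plan is to split the argument into two parts: first reduce Conjecture~\ref{con3} to its subspace analogue Conjecture~\ref{con2}, and then attempt a wedge-product proof of Conjecture~\ref{con2} modelled on the proof of Theorem~\ref{thm:second}.

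For the reduction, let $U$ be the vector space over $\F$ with basis indexed by the ground set $\bigcup_{i,j}A_{ij}$, and associate to each $A_{ij}$ the subspace $U_{ij}=\operatorname{span}\{e_x:x\in A_{ij}\}$. Then $\dim U_{ij}=\abs{A_{ij}}$ and $\dim(U_{ij}\cap U_{i'j'})=\abs{A_{ij}\cap A_{i'j'}}$, so hypotheses (i)--(iii) of Conjecture~\ref{con3} translate verbatim to those of Conjecture~\ref{con2}, and the bound on $m$ transfers.

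For Conjecture~\ref{con2}, I would emulate the proof of Theorem~\ref{thm:second}. For each pair $1\le a<b\le k$ I would introduce an auxiliary space $V_{a,b}$ of dimension $\ell_a+\ell_b-2t$, and seek (using a threshold analogue of Corollary~3.14 of \cite{BF1992}) a linear map $T_{ab}:U\to V_{a,b}$ satisfying $\dim T_{ab}(U_{ia})=\ell_a-t$, $\dim T_{ab}(U_{ib})=\ell_b-t$, and $T_{ab}(U_{ia})\cap T_{ab}(U_{i'b})\neq\{0\}$ whenever $\dim(U_{ia}\cap U_{i'b})>t$; in effect, $T_{ab}$ quotients out the unavoidable $t$-dimensional overlap of the $a$-th and $b$-th entries of each row while preserving any excess intersection across rows. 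Setting $v_i=\bigwedge_{a<b}\wedge T_{ab}(U_{ia})$ and $w_i=\bigwedge_{a<b}\wedge T_{ab}(U_{ib})$, the skew wedge-product argument would then give $v_i\wedge w_{i'}\neq 0$ if and only if $i=i'$, yielding the bound $m\le\prod_{a<b}\binom{\ell_a+\ell_b-2t}{\ell_a-t}$.

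The main obstacle is the gap between this product bound and the multinomial bound $\frac{[(\sum_r\ell_r)-kt]!}{\prod_r(\ell_r-t)!}$ claimed in Conjecture~\ref{con3}. Reaching the multinomial bound seems to demand a single linear map into a single space of dimension $(\sum_r\ell_r)-kt$ under which the images of $U_{i1},\ldots,U_{ik}$ lie in \emph{direct sum} position modulo the $t$-dimensional threshold, and the hypothesis $\dim(U_{ij}\cap U_{ij'})\le t$ is far too weak to force this when $k\ge 3$ --- exactly the pathology described in Section~2 after Theorem~\ref{thm:variant1}, now compounded by the threshold parameter. A natural intermediate goal would therefore be to prove a threshold analogue of Theorem~\ref{thm:variant1}, obtained by strengthening (ii) to an equality of the form $\dim\bigl(\sum_j U_{ij}\bigr)=\sum_j\dim U_{ij}-\sum_{j<j'}\dim(U_{ij}\cap U_{ij'})$, under which Einstein's original wedge-product construction should recover the sharp bound; this appears to be the most accessible route to partial progress on Conjecture~\ref{con3}.
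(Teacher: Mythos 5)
There is no paper proof to compare against here: the statement you were given is Conjecture~\ref{con3}, which the paper explicitly leaves open (``We do not know how to prove Lemma 2.9 and Theorem 2.10 of \cite{Einstein2008} and so we leave them as conjectures''). Your proposal, as you yourself concede in the final paragraph, does not prove it either. What your construction would deliver, granting the threshold analogue of Corollary 3.14 of \cite{BF1992} (a generic quotient with $t$-dimensional kernel composed with the general-position map, so that $\dim T_{ab}(U_{ia})=\ell_a-t$ and intersections of dimension $>t$ survive while those of dimension $\le t$ are killed), is the bound $m\le\prod_{a<b}\binom{\ell_a+\ell_b-2t}{\ell_a-t}=\frac{\prod_{1\le a<b\le k}(\ell_a+\ell_b-2t)!}{(\prod_{r=1}^k(\ell_r-t)!)^{k-1}}$. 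That is precisely the paper's last theorem, which it obtains by the same two ingredients you use --- the reduction from sets to coordinate subspaces and an application of the pairwise construction of Theorem~\ref{thm:second} --- and it is strictly weaker than the multinomial bound $\frac{[(\sum_r\ell_r)-kt]!}{\prod_r(\ell_r-t)!}$ of Conjecture~\ref{con3} as soon as $k\ge 3$. So the honest summary is: you have rediscovered the paper's weak variant, not proved the stated conjecture, and your diagnosis of the obstruction (condition (ii) cannot force the $k$ row subspaces into direct-sum position, even modulo the threshold) is exactly the one the paper gives for Conjecture~\ref{con}.

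One caution about your proposed ``most accessible route.'' A threshold analogue of Theorem~\ref{thm:variant1} with the strengthened hypothesis $\dim(\sum_j U_{ij})=\sum_j\dim U_{ij}-\sum_{j<j'}\dim(U_{ij}\cap U_{ij'})$ would not, even if proved, yield Conjecture~\ref{con3}: for the coordinate subspaces $U_{ij}=\operatorname{span}\{e_x:x\in A_{ij}\}$ one has $\dim(\sum_j U_{ij})=\lvert\bigcup_j A_{ij}\rvert$, and inclusion--exclusion contributes higher-order terms, so set systems satisfying (ii) of Conjecture~\ref{con3} need not satisfy your strengthened identity when $k\ge 3$. This is in substance the paper's remark that it is unclear how to relax the hypotheses of Lemma 2.9 and Theorem 2.10 of \cite{Einstein2008} ``while avoiding ugly conditions.'' Your reduction paragraph itself is fine and is the standard one; the gap is entirely in the second half, and it is the same gap the paper leaves open.
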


By using Theorem~\ref{thm:second}, we can prove the following weaker
variants of Conjectures~\ref{con2} and \ref{con3} by the same
reduction in \cite{Einstein2008}.
\begin{THM}
  Under the same assumptions of Conjecture~\ref{con2}, we have 
  \[
    m\le \frac{
      \prod_{1\le a<b\le k} (\ell_a+\ell_b-2t)!
    }{ (\prod_{r=1}^k (\ell_r-t)!)^{k-1}}.
  \]
\end{THM}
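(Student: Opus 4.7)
The plan is to apply the same ``threshold reduction'' used by Einstein in \cite{Einstein2008} to deduce his Lemma~2.9 from his Theorem~2.7, except with Theorem~\ref{thm:second} playing the role of Theorem~2.7. As usual we may pass to an infinite extension of $\F$ and assume $\dim U_{ij}=\ell_j$ for all $i,j$.

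The reduction aims to produce, for each $1\le a<b\le k$, a vector space $V_{a,b}$ of dimension $\ell_a+\ell_b-2t$, a linear transformation $T_{ab}:U\to V_{a,b}$, and, for each $i$, subspaces $\tilde U_{ia}^{(a,b)}\subseteq T_{ab}(U_{ia})$ of dimension $\ell_a-t$ and $\tilde U_{ib}^{(a,b)}\subseteq T_{ab}(U_{ib})$ of dimension $\ell_b-t$ satisfying
\begin{enumerate}[(a)]
\item $\tilde U_{ia}^{(a,b)}\cap \tilde U_{ib}^{(a,b)}=\{0\}$ for every $i$;
\item $\tilde U_{ia}^{(a,b)}\cap \tilde U_{i'b}^{(a,b)}\neq\{0\}$ whenever $i<i'$ and $\dim(U_{ia}\cap U_{i'b})>t$.
\end{enumerate}
The hypothesis $\ell_j\ge t$ from Conjecture~\ref{con2} is exactly what makes the dimension-shrinking by $t$ feasible, and condition (ii) of Conjecture~\ref{con2} is what enables (a).

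With the reduced data in hand, the argument of Theorem~\ref{thm:second} transcribes verbatim. Set $v_i=\bigwedge_{a=1}^{k-1}\bigwedge_{b=a+1}^{k}\wedge \tilde U_{ia}^{(a,b)}$ and $w_i=\bigwedge_{a=1}^{k-1}\bigwedge_{b=a+1}^{k}\wedge \tilde U_{ib}^{(a,b)}$. Property~(a) gives $v_i\wedge w_i\neq 0$, while property~(b), combined with hypothesis~(iii) of Conjecture~\ref{con2}, gives $v_i\wedge w_{i'}=0$ for all $i<i'$. Hence the $v_i$'s are linearly independent in $\bigwedge_{a<b}\bigwedge^{\ell_a-t}V_{a,b}$, whose dimension equals $\prod_{a<b}\binom{\ell_a+\ell_b-2t}{\ell_a-t}$, yielding the claimed bound on $m$.

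The main obstacle is verifying the existence of reduced data satisfying (a) and (b) simultaneously. Naive independent generic choices of the $\tilde U_{ij}^{(a,b)}$ can destroy any intersection of dimension strictly between $t$ and $2t$, because each of the two independent codimension-$t$ restrictions may shed up to $t$ dimensions of the intersection. The remedy is to coordinate the choices: for each witnessing pair $(i,i')$ furnished by (iii), both reduced subspaces on the relevant side must be forced to contain a prescribed nonzero element of the threshold intersection, while maintaining enough generic freedom to preserve (a) and the dimension counts. Executing this bookkeeping---which is the technical core of the reduction in \cite{Einstein2008}---completes the proof.
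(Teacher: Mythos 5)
Your overall plan (reduce the threshold condition to an honest disjointness/intersection condition, then run the wedge-product count of Theorem~\ref{thm:second}) is the right one, and your final dimension count is correct. But the step you yourself flag as ``the technical core'' --- the existence of the reduced subspaces satisfying (a) and (b) --- is exactly where the proof lives, and your sketch of how to get it does not work. Forcing each reduced subspace to contain ``a prescribed nonzero element of the threshold intersection'' for every witnessing pair is not feasible in general: a single $U_{ij}$ can serve as the witness on the $j$-side for many pairs $(i,i')$, so you may be asking a $(\ell_j-t)$-dimensional subspace to contain more than $\ell_j-t$ independent prescribed vectors, while simultaneously keeping it disjoint from its siblings $\tilde U_{ij'}$. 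Moreover, by interleaving the threshold reduction with the pairwise projections $T_{ab}$ you have made the problem harder than it is: inside $V_{a,b}$ you would still need the two reduced subspaces to span only $\ell_a+\ell_b-2t$ dimensions jointly, which circles back to needing a global reduction anyway.

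The reduction the paper invokes is done \emph{once, globally, before} any projection, and it requires no coordination at all. Replace $U$ by the (finite-dimensional) span $W$ of all the $U_{ij}$ and pass to an infinite extension of $\F$; choose a single generic subspace $E\le W$ of codimension $t$ and set $\tilde U_{ij}=U_{ij}\cap E$. Then $\dim \tilde U_{ij}=\max(0,\dim U_{ij}-t)\le \ell_j-t$; by hypothesis (ii) of Conjecture~\ref{con2} and genericity, $\tilde U_{ij}\cap \tilde U_{ij'}=U_{ij}\cap U_{ij'}\cap E=\{0\}$; and whenever $\dim(U_{ij}\cap U_{i'j'})>t$, the inequality $\dim(X\cap E)\ge \dim X-t$ gives $\tilde U_{ij}\cap \tilde U_{i'j'}\supseteq U_{ij}\cap U_{i'j'}\cap E\neq\{0\}$. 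The crucial point you missed is that using the \emph{same} $E$ for both sides of an intersection sheds only $t$ dimensions of that intersection, not $2t$, so intersections of dimension $t+1,\dots,2t$ survive. The reduced matrix $(\tilde U_{ij})$ now satisfies the hypotheses of Conjecture~\ref{con} with $\ell_j$ replaced by $\ell_j-t$, and Theorem~\ref{thm:second} applies as a black box to give $m\le \prod_{1\le a<b\le k}(\ell_a+\ell_b-2t)!/\bigl(\prod_{r=1}^k(\ell_r-t)!\bigr)^{k-1}$. There is no need to reopen the wedge-product argument at all.
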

\begin{THM}
  Under the same assumptions of Conjecture~\ref{con3}, we have 
  \[
    m\le \frac{
      \prod_{1\le a<b\le k} (\ell_a+\ell_b-2t)!
    }{ (\prod_{r=1}^k (\ell_r-t)!)^{k-1}}.
  \]
\end{THM}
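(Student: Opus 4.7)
The plan is to deduce this set version from the subspace version proved in the preceding theorem by means of the standard characteristic-vector reduction used in \cite{Einstein2008}.

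Let $X = \bigcup_{i,j} A_{ij}$, pick any field $\F$, and work inside $\F^X$ with standard basis $\{e_x : x \in X\}$. For each $(i,j)$ I would set $U_{ij} = \mathrm{span}\{e_x : x \in A_{ij}\}$. The one elementary fact driving the argument is that for any two subsets $S, T \subseteq X$, one has $U_S \cap U_T = U_{S \cap T}$, so $\dim U_{ij} = \abs{A_{ij}}$ and $\dim(U_{ij} \cap U_{i'j'}) = \abs{A_{ij} \cap A_{i'j'}}$ for all $i, i', j, j'$.

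With this translation, the three assumptions on the sets $A_{ij}$ from Conjecture~\ref{con3} (bounding $\abs{A_{ij}}$, bounding $\abs{A_{ij} \cap A_{ij'}}$ by $t$, and producing indices $j < j'$ with $\abs{A_{ij} \cap A_{i'j'}} > t$ whenever $i < i'$) become the three corresponding assumptions on the subspaces $U_{ij}$ from Conjecture~\ref{con2}. Applying the preceding theorem (our weaker variant of Conjecture~\ref{con2}) to $\{U_{ij}\}$ yields exactly the claimed bound on $m$. There is no substantive obstacle here: the whole argument is a black-box reduction, and the only thing to verify is the identity $U_S \cap U_T = U_{S \cap T}$, which is immediate from the definition of these subspaces as spans of distinct coordinate vectors; the hypothesis $\ell_i \ge t$ is inherited verbatim and plays no additional role in the reduction.
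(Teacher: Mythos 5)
Your reduction is correct and is exactly the route the paper intends: it deduces the set version from the preceding subspace theorem via characteristic vectors, i.e.\ ``the same reduction in \cite{Einstein2008}'', using that $\dim(U_S\cap U_T)=\abs{S\cap T}$ for coordinate subspaces. Nothing further is needed.
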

\section*{Acknowledgement}
The author would like to thank students attending the graduate course
on combinatorics in the spring semester of 2017;
they pointed out the difficulty, when they were given Conjecture~\ref{con}
for $k=3$ 
as a homework problem.
We also like to thank the anonymous referees for their helpful comments.


\end{document}